\documentclass[a4paper , 11pt]{amsart}
\usepackage{amsmath}
\usepackage{amsthm}
\usepackage{amsfonts}
\newtheorem{thm}{Theorem}[section]
\newtheorem{lem}[thm]{Lemma}
\newtheorem{prop}[thm]{Proposition}
\newtheorem{cor}[thm]{Corollary}
\theoremstyle{definition}
\newtheorem*{rem}{Remark}

\title{Transcendental $p$-adic continued fractions}
\author{Tomohiro Ooto}
\date{}
\begin{document}
\address{Graduate School of Pure and Applied Sciences, University of Tsukuba, Tennodai 1-1-1, Tsukuba, Ibaraki, 305-8571, Japan}
\email{ooto@math.tsukuba.ac.jp}
\subjclass[2000]{11J81, 11J70, 11J61.}
\keywords{Transcendental numbers; Continued fractions; Diophantine approximations; $p$-adic numbers.}

\maketitle
\begin{abstract}
We establish a new transcendence criterion of $p$-adic continued fractions which are called Ruban continued fractions.
By this result, we give explicit transcendental Ruban continued fractions with bounded $p$-adic absolute value of partial quotients.
This is $p$-adic analogy of Baker's result.
We also prove that $p$-adic analogy of Lagrange Theorem for Ruban continued fractions is not true.
\end{abstract}


\section{Introduction}
Maillet \cite{Maillet} is the first person who gave explicit transcendental continued fractions with bounded partial quotients.
After that, Baker \cite{Baker} extended Maillet's results with LeVeque Theorem \cite{LeVeque} which is Roth Theorem for algebraic number fields.

There exist continued fraction expansions in $p$-adic number field $\mathbb{Q}_p$, not just in $\mathbb{R}$.
Schneider \cite{Schneider} was motivated by Mahler's work \cite{Mahler} and gave an algorithm of $p$-adic continued fraction expansion.
In the same year, Ruban \cite{Ruban} also gave an different algorithm of $p$-adic continued fraction expansion.
Ubolsri, Laohakosol, Deze, and Wang gave several transcendence criteria for Ruban continued fractions (see \cite{Laohakosol2,Wang3,Wang1,Wang2}).
The proofs are mainly based on the theory of $p$-adic Diophantine approximations.
However, they only studied Ruban continued fractions with unbounded $p$-adic absolute value of partial quotients.
In this paper, we study analogy of Baker's transcendence criterion for Ruban continued fractions with bounded $p$-adic absolute value of partial quotients.

Let $p$ be a prime.
We denote by $|\cdot |_p$ the valuation normalized to satisfy $|p|_p =1/p$. 
A function $\lfloor \cdot \rfloor _p$ is given by the following:
$$ \lfloor \cdot \rfloor _p : \mathbb{Q}_p \rightarrow \mathbb{Q} \ ;
\lfloor \alpha \rfloor _p
= \begin{cases}
\sum_{n=m}^{0} c_n p^n & (m \leq 0), \\
0  & (m>0),
\end{cases} $$
where $\alpha = \sum_{n=m}^{\infty } c_n p^n ,\ c_n \in \{0,1, \ldots , p-1 \},\ m \in \mathbb{Z},\ c_m \not= 0 .$
The function is called a {\itshape $p$-adic floor function}.
If $\alpha \not = \lfloor \alpha \rfloor _p$, then we can write $\alpha $ in the form
$$ \alpha = \lfloor \alpha \rfloor _p + \frac{1}{\alpha _1}$$
with $\alpha _1 \in \mathbb{Q}_p$. Note that $ |\alpha _1|_p \geq p$ and $ \lfloor \alpha _1 \rfloor _p \not = 0$.
Similarly, if $\alpha _1 \not = \lfloor \alpha _1 \rfloor _p$, then we have
$$ \alpha _1 = \lfloor \alpha _1 \rfloor _p + \frac{1}{\alpha _2}$$
with $\alpha _2 \in \mathbb{Q}_p$.
We continue the above process provided $\alpha _n \not = \lfloor \alpha _n \rfloor _p$.
In this way, it follows that $\alpha $ can be written in the form
$$ \alpha = \lfloor \alpha \rfloor _p +\cfrac{1}{ \lfloor \alpha _1 \rfloor _p +\cfrac{1}{ \lfloor \alpha _2 \rfloor _p +\cfrac{1}{\ddots \lfloor \alpha _{n-1} \rfloor _p + \cfrac{1}{\alpha _n }}}}.$$
For simplicity of notation, we write the continued fraction 
$$ [ \lfloor \alpha \rfloor _p ,\lfloor \alpha _1 \rfloor _p ,\lfloor \alpha _2 \rfloor _p , \ldots , \lfloor \alpha _{n-1} \rfloor _p , \alpha _n ].$$
$\alpha _n$ is called the {\itshape $n$-th complete quotient} and either $\lfloor \alpha \rfloor _p$ or $\lfloor \alpha _n \rfloor _p$  is called a {\itshape partial quotient}.
If the above process stopped in a certain step, then
$$ \alpha =[ \lfloor \alpha \rfloor _p ,\lfloor \alpha _1 \rfloor _p ,\lfloor \alpha _2 \rfloor _p , \ldots , \lfloor \alpha _{n-1} \rfloor _p , \lfloor \alpha _n \rfloor _p ]$$
is called a {\itshape finite Ruban continued fraction}.
Otherwise, in the same way, we have
$$ \alpha =[ \lfloor \alpha \rfloor _p ,\lfloor \alpha _1 \rfloor _p ,\lfloor \alpha _2 \rfloor _p , \ldots , \lfloor \alpha _{n-1} \rfloor _p , \lfloor \alpha _n \rfloor _p , \ldots ]$$
which is called an {\itshape infinite Ruban continued fraction}.
As a remark, according to the fact that the Hensel expansion of a $p$-adic number is unique, we have the uniqueness of Ruban continued fraction expansions.

We define $S_p = \{ \lfloor \alpha \rfloor _p \ | \ \alpha \in \mathbb{Q}_p \},\ S' _p =\{\lfloor \alpha \rfloor _p \ | \ |\alpha |_p \geq p \ \mbox{for}\ \alpha \in \mathbb{Q}_p \}$.
Let $(a_i)_{i\geq 0}$ be a sequence with $a_0 \in S_p$ and $a_i \in S' _p$ for all $i\geq 1$, and $(n_i)_{i\geq 0}$ be an increasing sequence of positive integers.
Let $(\lambda _i)_{i\geq 0}$ and $(k_i)_{i\geq 0}$ be sequences of positive integers.
Assume that for all $i$, 
\begin{gather*}
n_{i+1} \geq n_i +\lambda _i k_i \\
a_{m+k_i} = a_m \mbox{ for } n_i \leq m \leq n_i +(\lambda _i -1)k_i -1. 
\end{gather*}
Consider a $p$-adic number $\alpha $ defined by
$$\alpha =[a_0 , a_1 , a_2 , \ldots , a_n , \ldots ].$$
Then $\alpha $ is called a {\itshape quasi-periodic Ruban continued fraction}.

The main theorem is the following.
\begin{thm} \label{Main Thm}
Let $(a_i)_{i\geq 0}, (n_i)_{i\geq 0}, (\lambda _i)_{i\geq 0}, \mbox{ and } (k_i)_{i\geq 0}$ be as in the above, and $A\geq p$ be a real number.
Assume that $(a_i)_{i\geq 0}$ is a non-ultimately periodic sequence  such that $|a_i |_p \leq A$ for each $i$.
If $a_{n_i} = a_{n_i +1} = \cdots = a_{n_i + k_i -1} = (p-1)+(p-1)p^{-1}=p-p^{-1}$ for infinitely many $i$ and
$$\liminf_{i\rightarrow \infty } \frac{\lambda _i}{n_i} > B=B(A),$$
where $B$ is defined by
$$B= \frac{2 \log A}{\log p} -1,$$
then $\alpha $ is transcendental.
\end{thm}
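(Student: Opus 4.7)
The plan is to follow Baker's argument, transposed to the $p$-adic setting: assume $\alpha$ is algebraic and use the quasi-periodic structure to manufacture a sequence of pairwise distinct quadratic algebraic approximations $\alpha^{(i)}$ that violate a $p$-adic analog of LeVeque's theorem. Let $\beta \in \mathbb{Q}_p$ denote the purely periodic Ruban continued fraction $[\overline{p-p^{-1}}]$; then $\beta$ is a fixed quadratic irrational over $\mathbb{Q}$, satisfying $\beta^2 - (p-p^{-1})\beta - 1 = 0$. The precise value $p-p^{-1}$ is important: it is the maximal element of $S'_p$ and equals $\lfloor \beta \rfloor _p$, ensuring the purely periodic expansion of $\beta$ is a legitimate Ruban continued fraction. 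For each $i$ in the infinite set of indices where $a_{n_i} = \cdots = a_{n_i+k_i-1} = p-p^{-1}$, define
$$\alpha^{(i)} := [a_0, a_1, \ldots, a_{n_i-1}, \beta],$$
a M\"obius transform of $\beta$ with rational coefficients, hence quadratic over $\mathbb{Q}$; by construction $\alpha$ and $\alpha^{(i)}$ share their first $n_i + \lambda_i k_i$ partial quotients.

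The heart of the argument is a pair of estimates. First, an induction on the convergent recursions $P_n = a_n P_{n-1} + P_{n-2}$ and $Q_n = a_n Q_{n-1} + Q_{n-2}$, using that $|a_j|_\infty < p$ archimedeanly and that the denominator of $a_j$ is a power of $p$ bounded by $A$ (since $|a_j|_p \leq A$), yields a classical height bound of the form $H(\alpha^{(i)}) \ll A^{2n_i}$. Second, setting $N := n_i + \lambda_i k_i$, the identity
$$\alpha - \alpha^{(i)} = \frac{\pm(\alpha_N - \beta)}{(Q_{N-1}\alpha_N + Q_{N-2})(Q_{N-1}\beta + Q_{N-2})},$$
together with the Ruban convergent identity $|Q_n|_p = \prod_{j=1}^n |a_j|_p \geq p^n$ and the fact that $|\alpha_N|_p, |\beta|_p \geq p$, gives
$$|\alpha - \alpha^{(i)}|_p \ll p^{-2(n_i+\lambda_i k_i)}.$$

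Combining the two estimates yields $|\alpha - \alpha^{(i)}|_p \ll H(\alpha^{(i)})^{-\mu_i}$ with $\mu_i \approx (1 + \lambda_i k_i/n_i)\log p/\log A$. The hypothesis $\liminf_{i\to\infty} \lambda_i/n_i > B = 2\log A/\log p - 1$ together with $k_i \geq 1$ forces $\mu_i > 2 + \delta$ for some fixed $\delta > 0$ and all sufficiently large $i$ in the distinguished infinite set. Since $(a_i)$ is not ultimately periodic, the $\alpha^{(i)}$ comprise infinitely many distinct quadratic algebraic numbers, so a $p$-adic LeVeque- or Schlickewei-type theorem applied to approximations of the algebraic number $\alpha$ by quadratic algebraic numbers delivers the required contradiction. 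The main obstacle I anticipate is tightening the two estimates so that the critical exponent comes out exactly as $B$: the height bound must carefully track the $p$-power denominators of the $a_j$, and the $p$-adic distance estimate must control the complete quotient $\alpha_N$ at the end of the quasi-period. A secondary concern is verifying that infinitely many of the $\alpha^{(i)}$ are genuinely distinct, which is precisely where the non-ultimate-periodicity of $(a_i)$ enters.
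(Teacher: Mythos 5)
There is a genuine error at the core of your proposal: the purely periodic Ruban continued fraction $\beta=[\overline{p-p^{-1}}]$ is \emph{not} a quadratic irrational. The polynomial you write down, $X^2-(p-p^{-1})X-1$, factors as $(X-p)(X+p^{-1})$, and indeed $\beta=-1/p$: one checks that $\lfloor -1/p\rfloor_p=(p-1)p^{-1}+(p-1)=p-p^{-1}$ and that the first complete quotient of $-1/p$ is again $-1/p$. This is exactly the content of the paper's Proposition \ref{rational} (Wang, Laohakosol): a Ruban expansion that is ultimately periodic with period $p-p^{-1}$ represents a \emph{rational} number. Consequently your approximants $\alpha^{(i)}=[a_0,\ldots,a_{n_i-1},\beta]$ are rational, not quadratic, and the hypothesis $a_{n_i}=\cdots=a_{n_i+k_i-1}=p-p^{-1}$ is there precisely to force this. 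The paper's proof exploits it: the rational $\eta^{(i)}$ has primitive height at most $|q_{n_i-1}|_p^{2}\le A^{2(n_i-1)}$ (Lemma \ref{height estimate}, rational case), and plain non-Archimedean Roth over $\mathbb{Q}$ with exponent $2+\delta$ then yields exactly the threshold $B=2\log A/\log p-1$.

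Your route, taken at face value, does not prove the stated theorem. If $\alpha^{(i)}$ is treated as a M\"obius transform of a genuinely quadratic $\beta$, clearing denominators in its minimal polynomial gives coefficients quadratic in integers of size $\asymp|q|_p^{2}$, hence a height bound of order $A^{4n_i}$, not the $A^{2n_i}$ you assert; your claimed bound is true only \emph{because} $\alpha^{(i)}$ is in fact rational, which your argument does not use. With the honest quadratic height bound, the LeVeque/Roth-over-a-number-field argument you sketch delivers only the weaker threshold $4\log A/\log p-1$ and only the conclusion ``quadratic irrational or transcendental'' --- this is the paper's Theorem \ref{Thm2}, not Theorem \ref{Main Thm}. (Your $p$-adic distance estimate $|\alpha-\alpha^{(i)}|_p\le|q_{n_i+\lambda_ik_i-1}|_p^{-2}$ and the bounds $p^n\le|q_n|_p\le A^n$ are correct and match the paper's Lemmas \ref{upper bound} and \ref{fundamental}; the distinctness worry is disposed of by the irrationality of $\alpha$ together with the finiteness in Roth's theorem.) To repair the proof, identify $\beta=-1/p$, note $\eta^{(i)}$ is rational with height $\le|q_{n_i-1}|_p^{2}$, and apply Roth over $\mathbb{Q}$.
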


As a remark, a sequence $(a_n)_{n \geq 0}$ is said to be {\itshape ultimately periodic} if there exist integers $k\geq 0$ and $l\geq 1$ such that $a_{n+l} =a_{n}$ for all $n\geq k$.

For example, the following $p$-adic numbers are transcendental:
\begin{gather}
[0, \overline{p-p^{-1}}^{2\cdot 3^0}, \overline{p^{-1}}^{2\cdot 3^1},\overline{p-p^{-1}}^{2\cdot 3^2}, \overline{p^{-1}}^{2\cdot 3^3}, \ldots , \overline{p-p^{-1}}^{2\cdot 3^{2m}}, \overline{p^{-1}}^{2\cdot 3^{2m+1}}, \ldots ], \label{Ex1}
\end{gather}
\begin{align}
[0, \overline{p^{-1}, p^{-2}}^{8\cdot 17^0}, \overline{p-p^{-1},} & \overline{p-p^{-1}}^{8\cdot 17^1}, \overline{p^{-1}, p^{-2}}^{8\cdot 17^2}, \overline{p-p^{-1}, p-p^{-1}}^{8\cdot 17^3} \nonumber \\
& , \ldots , \overline{p^{-1}, p^{-2}}^{8\cdot 17^{2m}}, \overline{p-p^{-1}, p-p^{-1}}^{8\cdot 17^{2m+1}}, \ldots ], \label{Ex2}
\end{align}
where $2\cdot 3^i$ and $8\cdot 17^i$ indicate the number of times a block of partial quotients is repeated.
(\ref{Ex1}) is the case that for $i\geq 0,\ a_{n_{2i}}=p-p^{-1},\ a_{n_{2i-1}}=p^{-1},\ n_i=3^i ,\ \lambda _i=2\cdot 3^i ,\ k_i=1,\ A=p$ in Theorem \ref{Main Thm}.
(\ref{Ex2}) is the case that for $i\geq 0,\ a_{n_{2i}}=p^{-1},\ a_{n_{2i} +1}=p^{-2},\ a_{n_{2i+1}}=a_{n_{2i+1} +1}=p-p^{-1} ,\ n_i=17^i ,\ \lambda _i=8\cdot 17^i ,\ k_i=2,\ A=p^2$ in Theorem \ref{Main Thm}.

A well-known Lagrange's theorem states that the continued fraction expansion for a real number is ultimately periodic if and only if the number is quadratic irrational.
For Schneider continued fractions, $p$-adic analogy of Lagrange's theorem is not true, that is, there exists a quadratic irrational number whose Schneider continued fraction is not ultimately periodic
(See e.g. Weger \cite{Weger}, Tilborghs \cite{Tilborghs}, van der Poorten \cite{Poorten}).
This paper deals with analogy of Lagrange's theorem for Ruban continued fractions.

We prove that  analogy  of Lagrange's theorem for Ruban continued fractions is not true in Section 2.
Auxiliary results for main results are gathered in Section 3.
In Section 4, we prove Theorem \ref{Main Thm} and give criteria of quadratic or transcendental in a certain class of Ruban continued fractions.
These proofs are mainly based on the proof of Baker's results and the non-Archimedean version of Roth's theorem for an algebraic  number field \cite{Silverman}.

\section{Rational and quadratic irrational numbers}

Wang \cite{Wang1} and Laohakosol \cite{Laohakosol} characterized rational numbers with Ruban continued fractions as follows.
\begin{prop} \label{rational}
Let $\alpha $ be a $p$-adic number.
Then $\alpha $ is rational if and only if its Ruban continued fraction expansion is finite or ultimately periodic with the period $p-p^{-1}$.
\end{prop}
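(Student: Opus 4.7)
For the easy direction ($\Leftarrow$): if the Ruban expansion of $\alpha$ terminates, $\alpha$ is obtained by finitely many field operations from rational partial quotients, hence $\alpha \in \mathbb{Q}$. If instead the expansion is ultimately periodic with repeating block the single element $p - p^{-1}$, let $\beta$ be the complete quotient at which the purely periodic tail begins. Then $\beta = (p - p^{-1}) + 1/\beta$, so $\beta$ satisfies $\beta^2 - (p - p^{-1})\beta - 1 = 0$; the discriminant simplifies to $(p - p^{-1})^2 + 4 = (p + p^{-1})^2$, whence $\beta \in \{ p, -p^{-1} \}$. The complete-quotient constraint $|\beta|_p \geq p$ excludes $\beta = p$ (since $|p|_p = 1/p$), forcing $\beta = -p^{-1} \in \mathbb{Q}$ and hence $\alpha \in \mathbb{Q}$. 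A direct Hensel computation confirms $\lfloor -p^{-1} \rfloor_p = p - p^{-1}$, consistent with the assumed periodic block.

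For ($\Rightarrow$), assume $\alpha = A/B \in \mathbb{Q}$ and that the Ruban expansion is infinite. The recursion $\alpha_{n+1} = 1/(\alpha_n - \lfloor \alpha_n \rfloor_p)$ together with $\lfloor \alpha_n \rfloor_p \in \mathbb{Q}$ shows by induction that every complete quotient $\alpha_n$ is rational; write $\alpha_n = A_n/B_n$ in lowest terms with $B_n > 0$. The core of the argument is to show that $(\alpha_n)_{n \geq 0}$ takes values in a finite set of rationals. For this I would combine the convergent identity $\alpha_n = -(Aq_{n-2} - Bp_{n-2})/(Aq_{n-1} - Bp_{n-1})$ with the estimate $|Aq_n - Bp_n|_p \to 0$ (a consequence of $|q_n|_p \geq p^n |q_0|_p$, itself coming from $|a_n|_p \geq p$ for $n \geq 1$), balanced against the archimedean growth of $p_n, q_n$ driven by the partial quotients. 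Together these two bounds confine the reduced pair $(A_n, B_n)$ to a finite set. Pigeonhole then yields $\alpha_{n_0} = \alpha_{n_1}$ for some $n_0 < n_1$, so the Ruban expansion is ultimately periodic.

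It remains to identify the period. Setting $\beta = \alpha_{n_0}$ and $\ell = n_1 - n_0$, the element $\beta \in \mathbb{Q}$ is a rational fixed point of the M\"obius transformation induced by the block $(a_{n_0}, \ldots, a_{n_0 + \ell - 1})$, and satisfies $|\beta|_p \geq p$. The \emph{main obstacle} is deducing $\ell = 1$ and $\beta = -p^{-1}$, since a priori there could be longer rational periodic orbits. I would argue by induction on $\ell$: for $\ell \geq 2$, each intermediate complete quotient $\alpha_{n_0 + j}$ ($0 \leq j < \ell$) is itself rational, purely periodic under a cyclic shift of the block, and satisfies $|\alpha_{n_0 + j}|_p \geq p$. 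The requirement $a_i \in S_p'$ for each $i$, together with the discriminant analysis of the first paragraph applied now to the $\ell$-step self-map, leaves $\beta = -p^{-1}$ (with repeating block $p - p^{-1}$) as the only possibility, forcing $\ell = 1$. Uniqueness of the Ruban expansion then gives the claimed form for $\alpha$.
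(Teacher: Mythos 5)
The paper offers no proof of this proposition --- it simply cites \cite{Wang1} and \cite{Laohakosol} --- so your attempt can only be judged against the standard arguments in those references, whose architecture you have in fact correctly reproduced. Your ($\Leftarrow$) direction is complete and correct: the roots of $\beta^2-(p-p^{-1})\beta-1=0$ are $p$ and $-p^{-1}$, and the constraint $|\beta|_p\geq p$ on complete quotients selects $\beta=-p^{-1}$. Your plan for the finiteness of the set of complete quotients in the ($\Rightarrow$) direction also names the right ingredients (the formula $\alpha_n=-(Aq_{n-2}-Br_{n-2})/(Aq_{n-1}-Br_{n-1})$, the $p$-adic decay of $Aq_n-Br_n$, and the archimedean bound $q_n\leq |q_n|_p$, $r_n\leq|r_n|_p$ of Lemma \ref{p-adic value is larger}), although the sentence ``these two bounds confine the reduced pair $(A_n,B_n)$ to a finite set'' glosses over the fact that bounding $|\alpha_n|$ and $|\alpha_n|_p$ does not by itself bound a reduced fraction; one must also control $|\alpha_n|_q$ for the remaining primes $q$, which does follow because $P_n=Aq_n-Br_n$ lies in $\frac{1}{1}\mathbb{Z}[1/p]$ (for integral $A,B$) and the product formula applied to the nonzero integer $P_n\max(|q_n|_p,|r_n|_p)$ bounds its prime-to-$p$ part.

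The genuine gap is the final step, which you flag as the main obstacle and then do not overcome. After pigeonhole produces a rational purely periodic tail $\beta=[\overline{b_0,\ldots,b_{\ell-1}}]$, you must exclude $\ell\geq 2$; the claim that ``the discriminant analysis of the first paragraph applied now to the $\ell$-step self-map leaves $\beta=-p^{-1}$ as the only possibility, forcing $\ell=1$'' is an assertion, not an argument. For a block of length $\ell$ the relevant quadratic is $q_{\ell-1}x^2+(q_{\ell-2}-r_{\ell-1})x-r_{\ell-2}=0$, whose discriminant equals $(q_{\ell-2}+r_{\ell-1})^2+4(-1)^{\ell-1}$ by (\ref{pro3}); deciding when this is a rational square is no longer the two-line computation of the $\ell=1$ case, but requires combining $q_n\leq|q_n|_p$, $r_n\leq|r_n|_p$ with a factorization of $x^2-y^2=\pm 4p^{2j}$ in integers prime to $p$ and then showing that the borderline solution forces every $b_i$ to equal $p-p^{-1}$, collapsing the period to length one. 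Moreover the proposed ``induction on $\ell$'' has no inductive step: the intermediate complete quotients $\alpha_{n_0+j}$ are purely periodic with blocks of the \emph{same} length $\ell$ (cyclic shifts), so knowing the result for shorter blocks gives nothing. As it stands, your argument would establish only that a rational $\alpha$ has an ultimately periodic Ruban expansion, not that the period is the single quotient $p-p^{-1}$.
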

\begin{proof}
See \cite{Wang1} or \cite{Laohakosol}.
\end{proof}

Next, we prove that analogy of Lagrange's theorem for Ruban continued fractions is not true by the similar method as in \cite{Weger}.
We consider a Ruban continued fraction for $\alpha = \sqrt{D}$ where $D \in \mathbb{Z}$ not a square, but a quadratic residue modulo $p$, if $p$ is odd, 1 modulo $8$, if $p=2$, so that $\alpha \in \mathbb{Q}_p$.
If the Ruban continued fraction of $\alpha $ is $[a_0 , a_1 , a_2 , \ldots ]$, then there exist rational numbers $R_n , Q_n$ such that
$$\alpha _n = \frac{R_n +\sqrt{D}}{Q_n}$$
for $n \in \mathbb{Z}_{\geq 0}$.
Obviously, $R_0 = 0, Q_0 = 1$, and for all $n$ we have the recursion formula
$$ R_{n+1} = -(R_n - a_n Q_n),\ Q_{n+1} = \frac{D - R_{n+1} ^2}{Q_{n}} $$
by induction on $n$.
\begin{prop} \label{quadratic}
If  $\ R_m Q_m \leq 0,\ \mbox{and}\ R_{m+1}^2 >D\ \mbox{for some}\ m$, then the Ruban continued fraction expansion of $\alpha $ is not ultimately periodic.
\end{prop}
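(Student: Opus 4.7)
The plan is to show, by induction on $n \ge m$, that the two conditions $R_n Q_n \le 0$ and $R_{n+1}^2 > D$ persist for every $n \ge m$, and moreover that $|R_n|$ is strictly increasing for $n > m$. Since ultimate periodicity of the Ruban continued fraction forces the sequence $(R_n, Q_n)$ itself to be ultimately periodic (hence bounded), this monotonicity yields the contradiction.

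For the inductive step, assume $R_n Q_n \le 0$ and $R_{n+1}^2 > D$. A preliminary observation is that every element of $S'_p$ is strictly positive: from the Hensel expansion $\alpha = \sum_{k\ge m} c_k p^k$ with $m \le -1$ and $c_m \ne 0$, one has $\lfloor \alpha \rfloor_p = \sum_{k=m}^{0} c_k p^k \ge c_m p^m > 0$. In particular $a_n > 0$ for all $n \ge 1$; for $n = 0$ the hypothesis $R_1 = a_0$ together with $R_1^2 > D > 0$ forces $a_0 > 0$ as well. Using the recursion I would then compute $R_{n+1}/Q_n = a_n - R_n/Q_n \ge a_n > 0$, so $R_{n+1}$ has the same sign as $Q_n$, while $Q_{n+1}/Q_n = (D - R_{n+1}^2)/Q_n^2 < 0$, so $Q_{n+1}$ has the opposite sign to $Q_n$. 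Combining these gives $R_{n+1} Q_{n+1} < 0 \le 0$.

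To obtain $R_{n+2}^2 > D$ and the growth, I would expand $R_{n+2} = a_{n+1} Q_{n+1} - R_{n+1}$. Since $R_{n+1}$ and $Q_{n+1}$ have opposite signs and $a_{n+1} > 0$, the two summands $a_{n+1} Q_{n+1}$ and $-R_{n+1}$ share a sign, so
\[ |R_{n+2}| \;=\; a_{n+1} |Q_{n+1}| + |R_{n+1}| \;>\; |R_{n+1}| \;>\; \sqrt{D}, \]
which closes the induction and simultaneously gives the strict monotonicity $|R_{n+1}| < |R_{n+2}| < \cdots$.

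For the contradiction, suppose the continued fraction were ultimately periodic with period $T$. Then for $n$ sufficiently large $\alpha_n = \alpha_{n+T}$, i.e.\ $(R_n + \sqrt{D})/Q_n = (R_{n+T} + \sqrt{D})/Q_{n+T}$; because $D$ is not a square, $1$ and $\sqrt{D}$ are linearly independent over $\mathbb{Q}$, so clearing denominators forces $R_n = R_{n+T}$ and $Q_n = Q_{n+T}$, contradicting $|R_n| \to \infty$. The main obstacle I anticipate is the sign bookkeeping in the inductive step, in particular verifying that every partial quotient in $S'_p$ is strictly positive so that $R_{n+1}/Q_n$ is forced to be positive; once that point is settled, the remainder is formal manipulation of the recursion together with the standard rigidity of quadratic irrationals under periodicity.
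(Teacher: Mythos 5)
Your proof takes essentially the same route as the paper's: both arguments show that $|R_n|$ is strictly increasing from index $m+1$ onward and deduce non-periodicity from the resulting unboundedness of $(R_n)$. You merely merge the paper's two cases ($R_mQ_m<0$ and $R_mQ_m=0$) into a single computation with the ratio $R_{n+1}/Q_n$, add the (correct and worthwhile) observation that every element of $S'_p$ is strictly positive, and spell out the rigidity step --- that ultimate periodicity of the partial quotients forces periodicity, hence boundedness, of $(R_n,Q_n)$ --- which the paper leaves implicit.

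One sub-claim in your base case is wrong, however: you assert that ``$R_1=a_0$ together with $R_1^2>D>0$ forces $a_0>0$.'' Nothing in the proposition guarantees $D>0$; indeed Corollary \ref{quadratic cor} applies the proposition precisely when $D<0$. In the case $m=0$, $a_0=0$ one has $R_1=a_0Q_0=0$, and the hypothesis $R_1^2>D$ then simply forces $D<0$ rather than yielding a contradiction. In that situation $R_1/Q_0=0$, so your assertion that $R_{n+1}$ has the same (strict) sign as $Q_n$, and hence that $R_{n+1}Q_{n+1}<0$, fails at $n=0$; likewise the inequality $|R_{n+1}|>\sqrt{D}$ is not meaningful for negative $D$. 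The damage is limited: $Q_1=(D-R_1^2)/Q_0=D<0$, so $R_1Q_1=0\le 0$ and your weak induction invariant survives, $|R_2|=a_1|Q_1|>0=|R_1|$, and from $n=1$ on all partial quotients lie in $S'_p$ and are positive, so the rest of your argument goes through verbatim. The gap is therefore repairable in one line, but as written the base case of your induction rests on a premise ($D>0$) that is false in exactly the situation where the proposition is later applied.
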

\begin{proof}
We show $R_{m+1} Q_{m+1} < 0,\ R_{m+2}^2 >D,\ \mbox{and}\ |R_{m+2}|>|R_{m+1}|$.
Let us assume $R_m Q_m < 0$.
Then we have $R_m R_{m+1} < 0$ by the recursion formula for $R_{m+1}$.
We also obtain $Q_m Q_{m+1} < 0$ by the recursion formula for $Q_{m+1}$ and $R_{m+1} ^2 > D$.
Hence, we get $R_{m+1} Q_{m+1} <0$.
Furthermore, by $a_{m+1} \not = 0$, we have
$$|R_{m+2}| = |R_{m+1}| + a_{m+1} |Q_{m+1}| > |R_{m+1}|,$$
so that $R_{m+2} ^2 > D$.
Next, let us assume $R_m Q_m = 0$.
By $R_{m} =0$, we have $R_{m+1} = a_m Q_m$.
By the recursion formula for $Q_{m+1}$, we have $Q_{m} Q_{m+1}<0$.
Thus, we obtain $R_{m+1} Q_{m+1} < 0$.
In the same way, we see $|R_{m+2}| > |R_{m+1}|$ and $R_{m+2}^2 >D$.
Since $(|R_n|)_{n \geq m}$ is strictly increasing, the Ruban continued fraction expansion for $\sqrt{D}$ is not ultimately periodic.
\end{proof}

\begin{cor} \label{quadratic cor}
If $D<0$, then the Ruban continued fraction expansion of $p$-adic number $\sqrt{D}$ is not ultimately periodic.
\end{cor}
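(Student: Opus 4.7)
The plan is to obtain this as an immediate consequence of Proposition \ref{quadratic} by checking that its hypotheses hold at the very first index, namely $m = 0$. Recall that the setup preceding the proposition fixes $R_0 = 0$ and $Q_0 = 1$, so $R_0 Q_0 = 0$, and in particular $R_0 Q_0 \leq 0$ holds trivially.

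Next I would observe that, by the recursion $R_{n+1} = -(R_n - a_n Q_n)$, we have $R_1 = a_0$, which is a rational number; hence $R_1^2 \geq 0$. Since by hypothesis $D < 0$, this forces $R_1^2 > D$ automatically. Thus both hypotheses of Proposition \ref{quadratic} are verified with $m = 0$, and the proposition directly concludes that the Ruban continued fraction expansion of $\sqrt{D}$ is not ultimately periodic.

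There is essentially no obstacle: the only thing worth double-checking is that the quadratic setup is valid, i.e.\ that $\sqrt{D} \in \mathbb{Q}_p$ so the expansion exists, but this is covered by the standing assumption in this section on $D$ (quadratic residue modulo $p$ for odd $p$, or $1 \pmod 8$ for $p=2$), which is compatible with negative $D$. The proof is therefore a one-line application of the preceding proposition.
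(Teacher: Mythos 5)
Your proposal is correct and coincides with the paper's own proof: both apply Proposition \ref{quadratic} with $m=0$, noting that $R_0 Q_0 = 0$ and $R_1^2 \geq 0 > D$. No issues.
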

\begin{proof}
Since $R_0 Q_0 =0$, and $R_1 ^2 \geq 0$, the corollary follows.
\end{proof}

\section{Auxiliary results}
For an infinite Ruban continued fraction $\alpha =[a_0 , a_1 , a_2 , \ldots]$, we define nonnegative rational numbers $q_n , r_n$ by using recurrence equations:
\[
\begin{cases}
q_{-1} =0,\ q_0 =1,\ q_n =a_n q_{n-1} + q_{n-2}, & n\geq 1, \\
r_{-1} =1,\ r_0 =a_0 ,\ r_n =a_n r_{n-1} + r_{n-2}, & n\geq 1.
\end{cases}
\]
Let $\lambda $ be a variable. 
Then the Ruban continued fraction has the following properties which are the same properties as the continued fraction expansions for real numbers: For all $n\geq 0$,
\begin{gather}
[a_0 , a_1 , \ldots ,a_n]=\frac{r_n}{q_n}, \label{pro1} \\
[a_0 , a_1 , \ldots ,a_n , \lambda ]=\frac{\lambda r_n +r_{n-1}}{\lambda q_n +q_{n-1}}, \label{pro2} \\
r_{n-1} q_n - r_n q_{n-1}= (-1)^n . \label{pro3}
\end{gather}
Those are easily seen by induction on $n$.
\begin{lem} \label{fundamental}
The following equalities hold:
\begin{gather}
|q_n|_p =|a_1 a_2 \cdots a_n|_p,\ n\geq 1, \label{pro4} \\
\begin{cases} \label{pro5}
|r_n|_p = |a_0 a_1 \cdots a_n|_p ,\ n\geq 1, & (a_0 \not = 0) \\
|r_1|_p =1,\ |r_n|_p = |a_2 a_3 \cdots a_n|_p ,\ n\geq 2, & (\mbox{otherwise})
\end{cases}
\\
\left|\alpha  -\frac{r_n}{q_n} \right|_p = \frac{1}{|a_{n+1} |_p |q_n|_p ^2},\ n\geq 0. \label{pro6}
\end{gather}
\end{lem}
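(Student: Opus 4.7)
The plan is to prove the three equalities by straightforward induction, leveraging the strong triangle inequality (ultrametric property) of $|\cdot|_p$ together with the fact that $|a_i|_p \geq p > 1$ for all $i \geq 1$, which is built into the definition of $S'_p$.

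For \eqref{pro4}, I would argue by induction on $n$. The base case $n=1$ gives $q_1 = a_1$, so $|q_1|_p = |a_1|_p$. For $n=2$, one has $q_2 = a_2 a_1 + 1$ with $|a_2 a_1|_p \geq p^2 > 1 = |1|_p$, so ultrametricity yields $|q_2|_p = |a_1 a_2|_p$. For the inductive step, assuming the formula holds through index $n-1$, I compute $|a_n q_{n-1}|_p = |a_1 a_2 \cdots a_n|_p$ and compare with $|q_{n-2}|_p = |a_1 \cdots a_{n-2}|_p$; their ratio is $|a_{n-1} a_n|_p \geq p^2 > 1$, so the first term dominates strictly, and the strong triangle inequality in $q_n = a_n q_{n-1} + q_{n-2}$ gives the claim. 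For \eqref{pro5}, the same induction applies when $a_0 \neq 0$ (using $|a_0|_p \geq 1$ and $|a_i|_p \geq p$). If $a_0 = 0$, then $r_0 = 0$ and $r_1 = 1$, so $|r_1|_p = 1$; from $r_2 = a_2$ onward the identical inductive argument applies with the starting data shifted.

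For \eqref{pro6}, the key identity is to use \eqref{pro2} with $\lambda = \alpha_{n+1}$, giving
$$\alpha = [a_0, a_1, \ldots, a_n, \alpha_{n+1}] = \frac{\alpha_{n+1} r_n + r_{n-1}}{\alpha_{n+1} q_n + q_{n-1}}.$$
Subtracting $r_n/q_n$ and using \eqref{pro3} produces
$$\alpha - \frac{r_n}{q_n} = \frac{(-1)^n}{q_n(\alpha_{n+1} q_n + q_{n-1})}.$$
It remains to evaluate $|\alpha_{n+1} q_n + q_{n-1}|_p$. Since $\alpha_{n+1} = a_{n+1} + 1/\alpha_{n+2}$ with $|1/\alpha_{n+2}|_p \leq 1/p < 1 \leq |a_{n+1}|_p/p$, I obtain $|\alpha_{n+1}|_p = |a_{n+1}|_p$. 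Then $|\alpha_{n+1} q_n|_p / |q_{n-1}|_p = |a_{n+1} a_n|_p \geq p^2 > 1$ (and for $n=0$ the term $q_{-1}=0$ makes this automatic), so by ultrametricity $|\alpha_{n+1} q_n + q_{n-1}|_p = |a_{n+1}|_p |q_n|_p$, and \eqref{pro6} follows.

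There is no real obstacle here; everything reduces to applying the strong triangle inequality once one observes that in each recurrence $X_n = a_n X_{n-1} + X_{n-2}$, the factor $|a_n a_{n-1}|_p \geq p^2$ guarantees strict dominance of the leading term. The only point requiring mild care is the $a_0 = 0$ branch of \eqref{pro5}, where the induction must be restarted at $n=2$ using $|r_1|_p = 1$ in place of $|a_0|_p$.
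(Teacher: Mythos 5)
Your proof is correct. The paper itself gives no argument for this lemma --- it simply refers the reader to Wang's paper \cite{Wang1} --- so there is no in-text proof to compare against; your inductive argument via the strong triangle inequality is the standard one and all the key points are handled properly: the dominance $|a_n X_{n-1}|_p / |X_{n-2}|_p = |a_{n-1}a_n|_p \geq p^2 > 1$ in each recurrence (using $|a_i|_p \geq p$ for $i \geq 1$, which indeed follows from the definition of $S'_p$), the separate treatment of the $a_0 = 0$ branch where $r_0 = 0$ forces the induction to restart from $|r_1|_p = 1$ and $r_2 = a_2$, and the identity $\alpha - r_n/q_n = (-1)^n / \bigl(q_n(\alpha_{n+1}q_n + q_{n-1})\bigr)$ combined with $|\alpha_{n+1}|_p = |a_{n+1}|_p$ (valid because the expansion is infinite, so $\alpha_{n+2}$ exists and $|1/\alpha_{n+2}|_p \leq 1/p < p \leq |a_{n+1}|_p$) and the degenerate case $q_{-1} = 0$ for $n = 0$. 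No gaps.
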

\begin{proof}
See \cite{Wang1}.
\end{proof}

\begin{lem} \label{upper bound}
If $\alpha '$ is a Ruban continued fraction in which the first $n+1$ partial quotients are the same as those of $\alpha $, then
$$ |\alpha -\alpha '|_p \leq  |q_n|_p ^{-2}.$$
\end{lem}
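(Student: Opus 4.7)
The plan is to parametrize both continued fractions by their $(n+1)$-th complete quotients. Since the first $n+1$ partial quotients coincide, we may write $\alpha = [a_0, a_1, \ldots, a_n, \alpha_{n+1}]$ and $\alpha' = [a_0, a_1, \ldots, a_n, \alpha'_{n+1}]$, where $\alpha_{n+1}$ and $\alpha'_{n+1}$ are the respective $(n+1)$-th complete quotients. (If $\alpha'$ is finite of length exactly $n+1$, then $\alpha' = r_n/q_n$ and the conclusion is immediate from (\ref{pro6}); from here on assume the generic case.) Formula (\ref{pro2}) then expresses
$$\alpha = \frac{\alpha_{n+1} r_n + r_{n-1}}{\alpha_{n+1} q_n + q_{n-1}}, \qquad \alpha' = \frac{\alpha'_{n+1} r_n + r_{n-1}}{\alpha'_{n+1} q_n + q_{n-1}}.$$

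Next I would take the difference, use the determinant identity (\ref{pro3}) to simplify the numerator, and obtain
$$\alpha - \alpha' = \frac{(-1)^{n+1}(\alpha_{n+1}-\alpha'_{n+1})}{(\alpha_{n+1} q_n + q_{n-1})(\alpha'_{n+1} q_n + q_{n-1})}.$$
Taking $p$-adic absolute values reduces the task to estimating the two factors in the denominator from below and the difference in the numerator from above.

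To control the denominator I would invoke the strong triangle inequality. Since every complete quotient satisfies $|\alpha_{n+1}|_p \geq p$ (a fact already recorded in the introduction) and since (\ref{pro4}) yields $|q_n|_p = |a_1\cdots a_n|_p \geq p^n > p^{n-1} \geq |q_{n-1}|_p$, we have the strict inequality $|\alpha_{n+1} q_n|_p > |q_{n-1}|_p$, whence $|\alpha_{n+1} q_n + q_{n-1}|_p = |\alpha_{n+1}|_p \, |q_n|_p$, and similarly for the primed quantity.

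Finally, the ultrametric inequality gives $|\alpha_{n+1} - \alpha'_{n+1}|_p \leq \max(|\alpha_{n+1}|_p, |\alpha'_{n+1}|_p)$; combining everything,
$$|\alpha - \alpha'|_p \leq \frac{\max(|\alpha_{n+1}|_p, |\alpha'_{n+1}|_p)}{|\alpha_{n+1}|_p \, |\alpha'_{n+1}|_p \, |q_n|_p^2} = \frac{1}{\min(|\alpha_{n+1}|_p, |\alpha'_{n+1}|_p)\,|q_n|_p^2} \leq \frac{1}{p\, |q_n|_p^2} \leq |q_n|_p^{-2}.$$
There is no real obstacle; the only mild care is ensuring that $|\alpha_{n+1} q_n|_p > |q_{n-1}|_p$ holds so that the ultrametric equality for the denominator is exact, and this is exactly what the growth of $|q_n|_p$ in (\ref{pro4}) guarantees.
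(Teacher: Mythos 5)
Your argument is correct, but it takes a genuinely different and more computational route than the paper. The paper simply observes that $r_n/q_n$ is the $n$-th convergent of both $\alpha$ and $\alpha'$, applies (\ref{pro6}) to get $|\alpha - r_n/q_n|_p = |a_{n+1}|_p^{-1}|q_n|_p^{-2} \leq |q_n|_p^{-2}$ and likewise for $\alpha'$, and concludes by the ultrametric inequality $|\alpha-\alpha'|_p \leq \max\bigl(|\alpha - r_n/q_n|_p,\, |\alpha'-r_n/q_n|_p\bigr)$ --- a two-line proof that never needs the explicit difference formula. Your version, expanding both numbers via their $(n+1)$-th complete quotients and using (\ref{pro3}) to collapse the numerator, proves the same bound (indeed the sharper bound with the extra factor $\min(|\alpha_{n+1}|_p,|\alpha'_{n+1}|_p)^{-1}$, which is essentially what the paper's route also yields) and correctly handles the degenerate finite case. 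One small blemish: the chain $|q_n|_p \geq p^n > p^{n-1} \geq |q_{n-1}|_p$ is not valid, since $|q_{n-1}|_p = |a_1\cdots a_{n-1}|_p$ can exceed $p^{n-1}$ when some $|a_i|_p > p$; the inequality you actually need, $|q_n|_p = |a_n|_p|q_{n-1}|_p \geq p|q_{n-1}|_p > |q_{n-1}|_p$, follows directly from (\ref{pro4}) and does give the ultrametric equality $|\alpha_{n+1}q_n + q_{n-1}|_p = |\alpha_{n+1}|_p|q_n|_p$, so the conclusion stands.
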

\begin{proof}
Since $r_n/q_n$ is a $n$-th convergent to both $\alpha $ and $\alpha '$, and (\ref{pro6}), the lemma follows.
\end{proof}
\begin{lem} \label{p-adic value is larger}
The following inequalities hold:
$$q_n \leq |q_n|_p ,\; r_n \leq |r_n|_p ,\ \mbox{for all}\ n\geq -1.$$
\end{lem}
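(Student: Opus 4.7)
My approach is induction on $n$, treating $q_n$ and $r_n$ in parallel. The base cases $n\in\{-1,0\}$ are immediate from the initial data: $q_{-1}=0=|q_{-1}|_p$, $q_0=1=|q_0|_p$, $r_{-1}=1=|r_{-1}|_p$, and $r_0=a_0$ is bounded by $|a_0|_p$ via the Hensel expansion. Writing $a_0=\sum_{j=m_0}^{0}c_j p^j$ with $c_{m_0}\neq 0$ and $m_0\leq -1$ gives $a_0\leq (p-1)\sum_{j=m_0}^{0}p^j = p-p^{m_0}\leq p^{-m_0}=|a_0|_p$.

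For the inductive step, the recurrence $q_n=a_n q_{n-1}+q_{n-2}$ together with the inductive hypothesis gives
\[
q_n \;\leq\; a_n|q_{n-1}|_p + |q_{n-2}|_p.
\]
Lemma \ref{fundamental} identifies $|q_n|_p = |a_n|_p|q_{n-1}|_p$ (equivalently, the strong triangle inequality applies to the recurrence since $|a_n q_{n-1}|_p = |a_n|_p|a_{n-1}|_p|q_{n-2}|_p \geq p^2|q_{n-2}|_p$) and also yields $|q_{n-1}|_p \geq p|q_{n-2}|_p$. Hence $q_n \leq |q_n|_p$ reduces to
\[
|a_n|_p - a_n \;\geq\; \frac{|q_{n-2}|_p}{|q_{n-1}|_p},
\]
and \emph{a fortiori} to the single-partial-quotient bound $|a_n|_p - a_n \geq 1/p$.

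The arithmetic core is verifying this last bound. Writing $a_n=\sum_{j=m_n}^{0}c_j p^j$ with $c_{m_n}\neq 0$ and $m_n\leq -1$ (since $a_n\in S'_p$ for $n\geq 1$), one has the real-value bound $a_n\leq p-p^{m_n}$ while $|a_n|_p=p^{-m_n}$; therefore $|a_n|_p - a_n \geq p^{-m_n}+p^{m_n}-p \geq 1/p$, with equality precisely at $m_n=-1$, $a_n=p-p^{-1}$. The same induction applied to $r_n$, combined with Lemma \ref{fundamental}'s identification of $|r_n|_p$, yields $r_n\leq|r_n|_p$.

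The main delicate point is that the two slacks $|a_n|_p - a_n \geq 1/p$ and $|q_{n-1}|_p / |q_{n-2}|_p \geq p$ are simultaneously tight exactly in the extremal case $a_n = p-p^{-1}$ with $|a_{n-1}|_p=p$; the induction closes only with equality in this borderline situation, so one must keep the refined inequality $|a_n|_p - a_n \geq |q_{n-2}|_p/|q_{n-1}|_p$ in hand rather than weaken to $1/p$ too early. (This extremal value $p-p^{-1}$ is, tellingly, the very partial quotient that plays the starring role in the hypothesis of Theorem \ref{Main Thm}.)
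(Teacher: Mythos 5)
Your proof is correct and follows essentially the same route as the paper's: induction on $n$, the Hensel-expansion bound $a_n \le p - p^{m_n} = p - 1/|a_n|_p$, the ratio bound $|q_{n-2}|_p/|q_{n-1}|_p \le 1/p$ from Lemma \ref{fundamental}, and the elementary inequality $x + 1/x \ge p + 1/p$ for $x=|a_n|_p\ge p$. One small quibble: your closing caution against weakening to $1/p$ ``too early'' is misleading, since reducing to $|a_n|_p - a_n \ge 1/p$ is precisely the step that closes the induction (and is in effect what the paper does); the chain is tight at $a_n=p-p^{-1}$ but never reversed.
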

\begin{proof}
The proof is by induction on $n$.
It is obvious that for $n=-1,0$.
By Lemma \ref{fundamental} and the definition of Ruban continued fraction expansions, we have
\begin{eqnarray*}
q_n & \leq  & a_n |q_{n-1}|_p + |q_{n-2}|_p
\leq \left( p- \frac{1}{|a_n|_p} \right) |q_{n-1}|_p + |q_{n-2}|_p \\
& \leq  & |q_{n-1}|_p \left( p+ \frac{1}{p} - \frac{1}{|a_n|_p}\right)
\leq |q_n|_p.
\end{eqnarray*}
The proof for $r_n$ is similar.
\end{proof}

For $\beta \in \overline{\mathbb{Q}}$, let $f_\beta (X)= \sum_{i=0}^{n} d_i X^i$ be a minimum polynomial of $\beta $ in $\mathbb{Z}[X]$.
Put
$$H(\beta ):= \max_{0\leq i \leq n} {|d_i|}.$$
$H(\beta )$ is called a {\itshape primitive height} of $\beta $.
\begin{lem} \label{height estimate}
Suppose $a_0 =0$. Let $h$, $k$ be positive integers and consider the Ruban continued fraction
$$\eta =[0, a_1, \ldots ,a_{h-1}, \overline{a_h, \ldots , a_{h+k-1}}].$$
Then $\eta $ is rational or quadratic irrational. Furthermore, we have
$$ H(\eta )\leq 
\begin{cases}
\ p & (\mbox{if}\ \eta \ \mbox{is rational and } h=1) \\
\ |q_{h-1}|_p ^2 & (\mbox{if}\ \eta \ \mbox{is rational and } h\geq 2) \\
\ 2|q_{h+k-1}|_p ^4 & (\mbox{if}\ \eta \ \mbox{is quadratic irrational}).
\end{cases} $$
\end{lem}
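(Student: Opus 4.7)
The idea is to study the purely periodic tail $\xi := [\overline{a_h, \ldots, a_{h+k-1}}]$, which is the $h$-th complete quotient of $\eta$ and therefore satisfies $|\xi|_p \geq p$. Since $\xi = [a_h, \ldots, a_{h+k-1}, \xi]$, applying (\ref{pro2}) to the finite block with its own convergent data $r'_i, q'_i$ gives
\[
\xi = \frac{\xi r'_{k-1} + r'_{k-2}}{\xi q'_{k-1} + q'_{k-2}},
\qquad\text{equivalently}\qquad
q'_{k-1} \xi^2 + (q'_{k-2} - r'_{k-1}) \xi - r'_{k-2} = 0.
\]
A second application of (\ref{pro2}) yields $\eta = (\xi r_{h-1} + r_{h-2})/(\xi q_{h-1} + q_{h-2})$, so $\eta$ is a M\"obius transform of $\xi$ over $\mathbb{Q}$ and is therefore either rational or quadratic irrational.

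\textbf{Rational case.} By Proposition~\ref{rational} together with uniqueness of Ruban expansions, the period block must consist entirely of $p-p^{-1}$. Among the two roots $p$ and $-p^{-1}$ of $\xi^2 - (p-p^{-1})\xi - 1 = 0$, only $\xi = -p^{-1}$ satisfies $|\xi|_p \geq p$. When $h = 1$, $\eta = 1/\xi = -p$ and $H(\eta) = p$. When $h \geq 2$,
\[
\eta = \frac{r_{h-1} - p\, r_{h-2}}{q_{h-1} - p\, q_{h-2}}.
\]
A short induction on the recursion (with (\ref{pro4}) and (\ref{pro5})) shows $|q_n|_p q_n, |q_n|_p r_n \in \mathbb{Z}$, so multiplying numerator and denominator by $|q_{h-1}|_p$ produces integers $N, D$ with $\eta = N/D$. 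The essential bound is $|q_{h-1} - p\, q_{h-2}| \leq |q_{h-1}|_p$, obtained by rewriting $q_{h-1} - p\, q_{h-2} = -(p - a_{h-1}) q_{h-2} + q_{h-3}$ and using $0 < p - a_{h-1} \leq p$ with Lemma~\ref{p-adic value is larger}; the same argument controls the numerator, delivering $H(\eta) \leq \max(|N|, |D|) \leq |q_{h-1}|_p^2$.

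\textbf{Quadratic case.} Solving the M\"obius relation for $\xi$ gives $\xi = (r_{h-2} - q_{h-2} \eta)/(q_{h-1} \eta - r_{h-1})$. Substituting into $q'_{k-1} \xi^2 + (q'_{k-2} - r'_{k-1}) \xi - r'_{k-2} = 0$ and clearing $(q_{h-1} \eta - r_{h-1})^2$ yields a quadratic $A \eta^2 + B \eta + C = 0$ whose coefficients are explicit $\mathbb{Z}$-linear combinations of triple products formed from one element of $\{q'_{k-2}, q'_{k-1}, r'_{k-2}, r'_{k-1}\}$ and two of $\{q_{h-2}, q_{h-1}, r_{h-2}, r_{h-1}\}$. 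Multiplying through by $|q'_{k-1}|_p \cdot |q_{h-1}|_p^2$ clears all $p$-power denominators, and bounding each summand via Lemma~\ref{p-adic value is larger} together with $|q_{h+k-1}|_p = |q_{h-1}|_p \cdot |q'_{k-1}|_p$ from (\ref{pro4}) produces an upper bound for the height of an integer polynomial vanishing at $\eta$, which dominates $H(\eta)$.

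\textbf{Main obstacle.} The hardest point is keeping the constant in the quadratic bound at exactly $2$: the naive triangle inequality applied to the four summands inside each of $A$, $B$, $C$ overshoots by a factor of $2$ or more. I expect to recover this factor by the same rewriting idea used in the rational case---replacing $q'_{k-2} - r'_{k-1}$ via the recursion $q'_{k-1} = a_{h+k-1} q'_{k-2} + q'_{k-3}$ and grouping with the other terms so that the leading contributions cancel---so that each coefficient is bounded by roughly $2 |q'_{k-1}|_p |q_{h-1}|_p^2$ rather than $4 |q'_{k-1}|_p |q_{h-1}|_p^2$. This sign-chasing is the most delicate step of the argument.
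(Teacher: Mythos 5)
Your reduction to ``rational or quadratic irrational'' and your treatment of the rational case are sound and essentially match the paper, which likewise identifies the periodic tail of a rational $\eta$ with $-p^{-1}$ and bounds $|p\,q_{h-2}-q_{h-1}|$ and $|p\,r_{h-2}-r_{h-1}|$ by $|q_{h-1}|_p$ exactly as you do. The quadratic case, however, contains a genuine gap, and you have located it yourself: you never prove the constant $2$, you only say you ``expect'' the sign-chasing to work. As set up, it does not. Expanding $q'_{k-1}(r_{h-2}-q_{h-2}\eta)^2+(q'_{k-2}-r'_{k-1})(r_{h-2}-q_{h-2}\eta)(q_{h-1}\eta-r_{h-1})-r'_{k-2}(q_{h-1}\eta-r_{h-1})^2$, the coefficient of $\eta$ is a signed sum of six monomials of total weight $8$, and grouping them into differences of nonnegative products (which is all that $a_0=0$, i.e.\ $0\le r_n\le q_n\le|q_n|_p$, gives you) only brings the bound down to $4$ times the maximal monomial, not $2$. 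There is a second, independent defect: your normalization is wrong. Applying (\ref{pro4}) to the block $[a_h,\ldots,a_{h+k-1}]$ gives $|q'_{k-1}|_p=|a_{h+1}\cdots a_{h+k-1}|_p$, so $|q_{h-1}|_p\,|q'_{k-1}|_p=|q_{h+k-1}|_p/|a_h|_p$, not $|q_{h+k-1}|_p$; correspondingly $|r'_{k-1}|_p=|a_h|_p\,|q'_{k-1}|_p>|q'_{k-1}|_p$, so multiplying by $|q'_{k-1}|_p\,|q_{h-1}|_p^2$ does \emph{not} clear the denominators of the terms involving $r'_{k-1}$ (the block sequence has $a'_0=a_h\neq0$, so the inequality $r'_n\le q'_n$ you are implicitly importing from the unprimed sequence fails for it).

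The paper sidesteps both issues by eliminating the common complete quotient $\eta_h=\eta_{h+k}$ between the two identities $\eta=(\eta_h r_{h-1}+r_{h-2})/(\eta_h q_{h-1}+q_{h-2})=(\eta_h r_{h+k-1}+r_{h+k-2})/(\eta_h q_{h+k-1}+q_{h+k-2})$, both written with the convergents of $\eta$ itself. This yields $A=q_{h-2}q_{h+k-1}-q_{h-1}q_{h+k-2}$, $C=r_{h-2}r_{h+k-1}-r_{h-1}r_{h+k-2}$, and $B$ a sum of exactly two such determinant-like differences; since $0\le r_n\le q_n\le|q_n|_p$ and $|q_n|_p$ is nondecreasing, each difference of nonnegative products is at most $|q_{h+k-1}|_p^2$, so $\max(|A|,|B|,|C|)\le 2|q_{h+k-1}|_p^2$ and the constant $2$ falls out, while $|q_{h+k-1}|_p^2$ visibly clears every denominator. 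To repair your argument you should either switch to this elimination, or show that your quadratic is a rational multiple of the paper's and transfer its bound.
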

\begin{proof}
By $\eta _h = \eta _{h+k}$ and (\ref{pro2}), we obtain
$$ \eta = \frac{\eta _h r_{h-1} + r_{h-2}}{\eta _h q_{h-1} + q_{h-2}} = \frac{\eta _h r_{h+k-1} + r_{h+k-2}}{\eta _h q_{h+k-1} +q_{h+k-2}}.$$
Eliminating $\eta _h$, we have
$$A\eta ^2 + B\eta + C =0,$$
where
\begin{align*}
A & = q_{h-2}q_{h+k-1} - q_{h-1}q_{h+k-2}, \\
B & = q_{h-1}r_{h+k-2} + r_{h-1}q_{h+k-2} - r_{h-2}q_{h+k-1} - q_{h-2}r_{h+k-1}, \\
C & = r_{h-2}r_{h+k-1} - r_{h-1}r_{h+k-2}.
\end{align*}
Therefore, $\eta $ is either rational or quadratic irrational.
By the assumption $a_0 =0$, it follows that $r_n \leq q_n ,\ |r_n|_p \leq |q_n|_p \ \mbox{for all}\ n\geq 0$.
By induction on $n$, it is easy to check that $r_n|r_n|_p ,\ q_n|q_n|_p \in \mathbb{Z}\ \mbox{for all}\ n\geq 0$.

Let us assume that $\eta $ is a quadratic irrational.
By $|q_{h+k-1}|^2 _p A$, $|q_{h+k-1}|^2 _p B$, $|q_{h+k-1}|^2 _p C \in \mathbb{Z}$ and Lemma \ref{p-adic value is larger}, we obtain
\begin{eqnarray*}
H(\eta ) & \leq & |q_{h+k-1}|_p ^2 \max (|A|,|B|,|C|) \\
& \leq & 2 q_{h+k-1}^2 |q_{h+k-1}|_p ^2
\leq 2|q_{h+k-1}|_p ^4 .
\end{eqnarray*}
Now let us assume that $\eta $ is rational.
By Proposition \ref{rational}, we have 
\begin{eqnarray*}
\eta =[a_0, \ldots ,a_{h-1}, -1/p],
\end{eqnarray*}
that is,
\begin{eqnarray*}
\eta = \frac{p r_{h-2} - r_{h-1}}{p q_{h-2} - q_{h-1}}.
\end{eqnarray*}
When $h=1$, we see that $H(\eta )=p$.
Next we consider the case $h\geq 2$.
Since $(p r_{h-2} - r_{h-1})|q_{h-1}|_p$ and $(p q_{h-2} - q_{h-1})|q_{h-1}|_p$ are integers, we have
\begin{eqnarray*}
H(\eta ) & \leq & \max (|p r_{h-2} - r_{h-1}||q_{h-1}|_p,\ |p q_{h-2} - q_{h-1}||q_{h-1}|_p) \\
& \leq & |q_{h-1}|_p ^2 ,
\end{eqnarray*}
and the lemma follows.
\end{proof}

We recall a height of algebraic numbers which is different from the primitive height.
Let $K$ be an algebraic number field and $\mathcal{O}_K$ be the integer ring of $K$, and $M(K)$ be the set of places of $K$.
For $x \in K$ and $v \in M(K)$, we define the absolute value $|x|_v$ by
\begin{description}
\item[(i)] $|x|_v = |\sigma (x)|$   if $v$ corresponds the embedding $\sigma : K \hookrightarrow \mathbb{R}$
\item[(ii)] $|x|_v = |\sigma (x)|^2 = |\overline{\sigma} (x)|^2$   if $v$ corresponds the pair of conjugate embeddings $\sigma , \overline{\sigma} : K \hookrightarrow \mathbb{C}$
\item[(iii)] $|x|_v = ( \textnormal{N} (\mathfrak{p}))^{- \textnormal{ord} _{\mathfrak{p}} (x)}$   if $v$ corresponds to the prime ideal $\mathfrak{p}$ of $\mathcal{O}_K$.
\end{description}
Set
$$\overline{H}_K (\beta ):= \prod_{v \in M(K)} \max \left\{1, |\beta |_v \right\}$$
for $\beta \in K$.
$\overline{H}_K (\beta )$ is called an {\itshape absolute height} of $\beta $.
Then there are the following relations between primitive and absolute height.
\begin{prop} \label{primitive absolute}
For $b \in \mathbb{Q}$ and $\beta \in \overline{\mathbb{Q}}$ with $[\mathbb{Q}(\beta ) , \mathbb{Q}] = D$, we have
\begin{gather*}
H(b)= \overline{H}_{\mathbb{Q}} (b), \\
\overline{H}_{\mathbb{Q} (\beta )} (\beta ) \leq (D+1)^{1/2} H(\beta ),\ H(\beta ) \leq 2^D \overline{H}_{\mathbb{Q} (\beta )} (\beta ).
\end{gather*}
\end{prop}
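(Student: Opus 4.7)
The plan is to express both heights in terms of the Mahler measure of the primitive minimal polynomial $f_\beta(X)=\sum_{i=0}^{D}d_iX^i\in\mathbb{Z}[X]$ of $\beta$, namely
$$M(f_\beta)=|d_D|\prod_{i=1}^{D}\max\{1,|\beta^{(i)}|\},$$
where $\beta^{(1)},\ldots,\beta^{(D)}$ are the conjugates of $\beta$. For the rational case, write $b=m/n$ with $\gcd(m,n)=1$ and $n>0$: the Archimedean place contributes $\max(|m|,n)/n$, each prime $\ell$ contributes $\ell^{v_\ell(n)}$ by coprimality, and the product of these over all $\ell$ equals $n$, so $\overline{H}_{\mathbb{Q}}(b)=\max(|m|,n)=H(b)$.

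For $\beta$ of degree $D$, I would first establish the identity $\overline{H}_{\mathbb{Q}(\beta)}(\beta)=M(f_\beta)$ by decomposing the product place by place. The archimedean part $\prod_{v\,\text{arch.}}\max\{1,|\beta|_v\}$ equals $\prod_i\max\{1,|\beta^{(i)}|\}$: real places contribute real conjugates, while the squaring convention at complex places matches each complex place with its pair of conjugate embeddings. The non-archimedean part $\prod_{v\,\text{finite}}\max\{1,|\beta|_v\}$ equals $|d_D|$: writing the fractional ideal $(\beta)=\mathfrak{b}\mathfrak{a}^{-1}$ as a ratio of coprime integral ideals of $\mathcal{O}_{\mathbb{Q}(\beta)}$, primitivity of $f_\beta$ forces $N(\mathfrak{a})=|d_D|$.

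Given this identification, the two inequalities reduce to classical estimates for polynomials. Landau's inequality $M(f)\leq\|f\|_2=(\sum|d_i|^2)^{1/2}$ together with $\|f\|_2\leq(D+1)^{1/2}H(f)$ yields $\overline{H}_{\mathbb{Q}(\beta)}(\beta)\leq(D+1)^{1/2}H(\beta)$. For the lower bound, expanding $f_\beta/d_D=\prod_i(X-\beta^{(i)})$ and reading off $d_i/d_D$ as the $(D-i)$-th elementary symmetric function in the roots gives $|d_i|\leq\binom{D}{i}|d_D|\prod_j\max\{1,|\beta^{(j)}|\}\leq 2^D M(f_\beta)$, whence $H(\beta)\leq 2^D\overline{H}_{\mathbb{Q}(\beta)}(\beta)$. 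The main obstacle is verifying the non-archimedean identification $\prod_{v\,\text{finite}}\max\{1,|\beta|_v\}=|d_D|$: this requires matching the factorization of each rational prime in $\mathcal{O}_{\mathbb{Q}(\beta)}$ with the factorization of $f_\beta$ in the corresponding completion, and is where primitivity of the minimal polynomial truly plays a role.
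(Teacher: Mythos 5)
The paper gives no proof of this proposition---it simply cites Part B of Hindry--Silverman---and your route via the identity $\overline{H}_{\mathbb{Q}(\beta)}(\beta)=M(f_\beta)$, Landau's inequality $M(f)\leq \|f\|_2$, and the elementary-symmetric-function bound $\binom{D}{j}\leq 2^D$ is precisely the standard argument given in that reference. Your rational-number computation and both inequalities check out (including the constants $(D+1)^{1/2}$ and $2^D$), and you correctly isolate the Gauss-lemma/denominator-ideal identification $\prod_{v \nmid \infty}\max\{1,|\beta|_v\}=|d_D|$ as the one step requiring genuine work.
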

\begin{proof}
See Part B of \cite{Silverman}.
\end{proof}

The main tool for the proof of main results is the non-Archimedean version of Roth's theorem for algebraic number fields.
\begin{thm} \label{Roth}
{\normalfont (Roth Theorem).} 
Let $K$ be an algebraic number field, and $v$ be in $M(K)$ with it extended in some way to $\overline{K}$.
Let $\beta \in \overline{K} \backslash K$ and $\delta ,C >0$ be given.
Then there are only finite many $\gamma  \in K$ with the solution of the following inequality:
$$|\beta -\gamma  |_v \leq \frac{C}{\overline{H}_K (\gamma  )^{2+\delta }}.$$
\end{thm}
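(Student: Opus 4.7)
This is the Roth–Ridout–Lang theorem: Roth's theorem over $\mathbb{Q}$ at the archimedean place, extended by Ridout to non-archimedean places and by Lang to arbitrary number fields. The reasonable plan is to quote it from \cite{Silverman}, where a complete proof is given; what follows is only a sketch of the classical strategy one would follow for a self-contained treatment.

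Suppose for contradiction the inequality has infinitely many solutions $\gamma \in K$. Extract a finite subsequence $\gamma_1, \ldots, \gamma_m$ whose heights $H_i := \overline{H}_K(\gamma_i)$ grow so rapidly that the ratios $\log H_{i+1}/\log H_i$ can be taken as large as needed, with $m$ chosen depending on $\delta$. Using Siegel's lemma over $\mathcal{O}_K$, construct a nonzero auxiliary polynomial $P \in \mathcal{O}_K[X_1, \ldots, X_m]$ of multi-degree $(r_1, \ldots, r_m)$ with $r_i \log H_i$ approximately constant, chosen so that $P$ vanishes to high \emph{index} at the diagonal point $(\beta, \ldots, \beta)$ while its coefficients have controlled absolute height.

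A Taylor expansion of $P$ around $(\beta, \ldots, \beta)$, combined with the hypothesis $|\beta - \gamma_i|_v \leq C H_i^{-(2+\delta)}$ and the high vanishing index at the diagonal, forces $|\partial^{\mathbf{j}} P(\gamma_1, \ldots, \gamma_m)|_v$ to be extremely small for every low-order derivative multi-index $\mathbf{j}$. On the other hand, this value lies in $K$ with bounded absolute height, so the product formula forces it to be either zero or bounded below by a positive quantity incompatible with the $v$-adic bound above; hence every such low-order derivative of $P$ must vanish at $(\gamma_1, \ldots, \gamma_m)$, i.e.\ $P$ itself has large index there.

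The contradiction is then delivered by Roth's lemma, which bounds above the index $P$ can simultaneously attain at a tuple of points whose heights grow fast enough relative to the multi-degrees $r_i$. This combinatorial lemma, proved by a delicate induction on the number of variables $m$, is the genuine obstacle; Siegel's lemma, the Taylor estimate, and the product-formula argument that swaps an archimedean for a non-archimedean place are, by comparison, routine once the index lemma is in hand.
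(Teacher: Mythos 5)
Your proposal matches the paper exactly: the paper's entire proof is a citation to Part D of Hindry--Silverman, which is also your stated plan. The accompanying sketch of the classical Roth machinery (Siegel's lemma, index at the diagonal, the product-formula step, Roth's lemma) is accurate but not needed, since the paper quotes the theorem rather than reproving it.
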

\begin{proof}
See Part D of \cite{Silverman}.
\end{proof}

\section{Main results}

\begin{proof}[Proof of Theorem \ref{Main Thm}]
We may assume that $a_0=0$.
By the assumption, there are infinitely many positive integers $j$ which satisfy
\begin{eqnarray}
a_{n_j} = a_{n_j +1} = \cdots = a_{n_j +k_j -1} = p-p^{-1} . \label{Lambda1}
\end{eqnarray}
Let $\Lambda $ be an infinite set of $j$ which satisfy (\ref{Lambda1}).

For $i\in \Lambda $, we put
\begin{eqnarray*}
\eta ^{(i)}:=[0,a_1,\ldots ,a_{n_i-1},\overline{p-p^{-1}}].
\end{eqnarray*}
By Proposition \ref{rational}, $\alpha $ is not rational.
Suppose that $\alpha $ is an algebraic number of degree at least two.
We show that if $\chi >2$, then we have
\begin{eqnarray}
|\alpha - \eta ^{(i)}|_p > |q_{n_i -1}|_p ^{-2\chi } \label{contra1}
\end{eqnarray}
for all sufficiently large $i \in \Lambda $.
Suppose the claim is false.
By Proposition \ref{rational}, $\eta ^{(i)}$ is rational for each $i \in \Lambda $.
By Lemma \ref{height estimate} and Proposition \ref{primitive absolute}, we have
\begin{eqnarray*}
|\alpha - \eta ^{(i)}|_p 
& \leq & |q_{n_i -1}|_p ^{-2\chi }
\leq \overline{H} _{\mathbb{Q}} (\eta ^{(i)})^{-\chi } 
\end{eqnarray*}
for infinitely many $i$, which contradicts Theorem \ref{Roth}.

By Lemma \ref{upper bound}, we obtain $|\alpha - \eta ^{(i)}|_p \leq  |q_{m_i}|_p ^{-2}$ for $i \in \Lambda $, where $m_i =n_i +k_i \lambda _i -1$.
Therefore, we get
\begin{eqnarray*}
|q_{m_i}|_p < |q_{n_i -1}|_p ^{\chi }
\end{eqnarray*}
for sufficiently large $i \in \Lambda $.
By Lemma \ref{fundamental}, we see $p^i \leq |q_i|_p \leq A^i$ for $i\geq 1$.
Thus, for all sufficiently large $i \in \Lambda $, it follows that
$$\frac{\lambda _i}{n_i} < B + (\chi -2) \frac{\log A}{\log p} .$$
Since there exists $\delta >0$ such that $\lambda _i >(B +\delta )n_i$ for all sufficiently large $i$, we have for all sufficiently large $i \in \Lambda $,
$$2+ \frac{\log p}{\log A} \delta <\chi .$$
This inequality holds for each $\chi >2$, a contradiction.
\end{proof}

We also obtain the following results.
\begin{thm} \label{Thm2}
Let $\alpha $ be a quasi-periodic Ruban continued fraction, and $A\geq p$ be a real number.
Assume that $(a_i)_{i\geq 0}$ is a non-ultimately periodic sequence such that $|a_i |_p \leq A$ for each $i$, and $(k_i)_{i\geq 0}$ is bounded.
If
$$\limsup_{i\rightarrow \infty } \frac{\lambda _i}{n_i} > B'=B'(A),$$
where $B'$ is defined by
$$B'= \frac{4 \log A}{\log p} -1,$$
then $\alpha $ is quadratic irrational or transcendental.
\end{thm}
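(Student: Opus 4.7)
The plan is to mirror the proof of Theorem~\ref{Main Thm}, but to replace the purely rational auxiliary approximations used there with quadratic (or rational) ones obtained by periodizing the length-$k_i$ block starting at position $n_i$. Since $(a_i)_{i\geq 0}$ is non-ultimately periodic, Proposition~\ref{rational} already precludes $\alpha \in \mathbb{Q}$, so it is enough to rule out the case $[\mathbb{Q}(\alpha):\mathbb{Q}] \geq 3$. Suppose this for contradiction, and by the $\limsup$ hypothesis fix $\delta_0 > 0$ and an infinite set $\Lambda$ of indices with $\lambda_i / n_i > B' + \delta_0$ for every $i \in \Lambda$.

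For each $i \in \Lambda$ set
$$\eta^{(i)} := [0, a_1, \ldots, a_{n_i-1}, \overline{a_{n_i}, a_{n_i+1}, \ldots, a_{n_i+k_i-1}}].$$
By Lemma~\ref{height estimate}, $\eta^{(i)}$ is rational or quadratic irrational and satisfies $H(\eta^{(i)}) \leq 2|q_{n_i+k_i-1}|_p^4$; since $k_i$ is bounded and $|q_n|_p \leq A^n$ by Lemma~\ref{fundamental}, this yields $H(\eta^{(i)}) \leq C_1 A^{4 n_i}$ for a constant $C_1$ independent of $i$. On the other hand, Lemma~\ref{upper bound} gives $|\alpha - \eta^{(i)}|_p \leq |q_{m_i}|_p^{-2}$ with $m_i = n_i + k_i \lambda_i - 1 \geq n_i + \lambda_i - 1$.

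Now choose $\chi$ with $2 < \chi < 2 + \delta_0 \log p / (2 \log A)$. Combining $|q_{m_i}|_p \geq p^{m_i}$ with the hypothesis $\lambda_i/n_i > B' + \delta_0 = 4 \log A / \log p - 1 + \delta_0$ gives, for all sufficiently large $i \in \Lambda$, the inequality $|q_{m_i}|_p^2 \geq H(\eta^{(i)})^\chi$, whence $|\alpha - \eta^{(i)}|_p \leq H(\eta^{(i)})^{-\chi}$. Writing $K_i := \mathbb{Q}(\eta^{(i)})$ and using Proposition~\ref{primitive absolute} to compare heights, we obtain $|\alpha - \eta^{(i)}|_p \leq C_2\, \overline{H}_{K_i}(\eta^{(i)})^{-\chi}$. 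Since $[K_i : \mathbb{Q}] \leq 2$ while $[\mathbb{Q}(\alpha) : \mathbb{Q}] \geq 3$, we have $\alpha \in \overline{K_i} \setminus K_i$, and Theorem~\ref{Roth} applied in $K_i$ will yield the desired contradiction.

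The main obstacle is that the field $K_i$ varies with $i$, whereas Theorem~\ref{Roth} only bounds the number of good approximants in a \emph{fixed} number field. Since the distinct $\eta^{(i)}$ can genuinely lie in infinitely many different quadratic fields, Theorem~\ref{Roth} does not directly provide a single finite exceptional set. The most natural way to handle this is to invoke the uniform version of Roth's theorem for approximation by algebraic numbers of degree at most $2$ (which follows from the same machinery in~\cite{Silverman}); alternatively, one may symmetrize $\eta^{(i)}$ with its Galois conjugate in order to reduce the approximation problem to a fixed base field. Either route then produces the factor $4 = 2\cdot 2$ in $B'$, reflecting that the height bound for $\eta^{(i)}$ in Lemma~\ref{height estimate} carries exponent $4$ rather than the exponent $2$ available in Theorem~\ref{Main Thm}.
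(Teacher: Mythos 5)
Your reduction to ruling out $[\mathbb{Q}(\alpha):\mathbb{Q}]\geq 3$, the height estimate $H(\eta^{(i)})\leq C_1A^{4n_i}$, and the numerology leading to $|\alpha-\eta^{(i)}|_p\leq H(\eta^{(i)})^{-\chi}$ with $\chi>2$ all match the paper's proof. But the step you yourself flag as ``the main obstacle'' --- that the quadratic fields $K_i=\mathbb{Q}(\eta^{(i)})$ vary with $i$ --- is a genuine gap, and neither of your proposed fixes closes it within the paper's toolkit. A ``uniform Roth theorem for approximation by algebraic numbers of degree at most $2$'' does not follow from Theorem~\ref{Roth} ``by the same machinery'': Roth's theorem over a fixed number field gives no uniformity across infinitely many distinct quadratic fields, and the genuinely uniform statement (approximation by algebraic numbers of bounded degree) requires Schmidt-subspace-type input that is not part of what the paper imports from \cite{Silverman}. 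Likewise, symmetrizing $\eta^{(i)}$ with its Galois conjugate converts the problem into a lower bound for $|P(\alpha)|_p$ over integer quadratic polynomials $P$ of growing height, which is again an approximation-by-quadratics statement and does not reduce to Roth over $\mathbb{Q}$.

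The paper closes this gap elementarily, and this is precisely where the hypothesis that $(k_i)_{i\geq 0}$ is bounded (together with $|a_i|_p\leq A$) earns its keep beyond the height estimate. Since $|a|_p\leq A$ confines each partial quotient to a finite subset of $S'_p$ and $k_i$ takes only finitely many values, there are only finitely many possibilities for the data $\bigl(k_i;\,a_{n_i},\ldots,a_{n_i+k_i-1}\bigr)$. By pigeonhole one fixes $k$ and a block $b_1,\ldots,b_k$ realized for infinitely many of the indices $j$ satisfying $\lambda_j>(B'+\delta)n_j$, and defines $\eta^{(i)}=[0,a_1,\ldots,a_{n_i-1},\overline{b_1,\ldots,b_k}]$ only for these $i$. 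The periodic tail $\theta=[\overline{b_1,\ldots,b_k}]$ is then one fixed rational or quadratic number, so $\eta^{(i)}=(\theta r_{n_i-1}+r_{n_i-2})/(\theta q_{n_i-1}+q_{n_i-2})$ lies in the single field $K=\mathbb{Q}(\theta)$ for every such $i$, and Theorem~\ref{Roth} applies in $K$ with a place $v\mid p$. With that substitution the remainder of your argument goes through.
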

\begin{thm} \label{Thm3}
Consider a quasi-periodic Ruban continued fraction
\begin{eqnarray*}
\alpha =[a_0, \ldots , a_{n_0-1}, \overline{a_{n_0}, \ldots , a_{n_0+k_0-1}}^{\lambda _0}, \overline{a_{n_1}, \ldots , a_{n_1+k_1-1}}^{\lambda _1}, \ldots ],
\end{eqnarray*}
where the notation means that $n_i = n_{i-1} + \lambda _{i-1}k_{i-1}$.
Assume that $(a_i)_{i\geq 0}$ is not an ultimately periodic sequence, the sequences $(|a_i|_p)_{i\geq 0} \ \mbox{and}\ (k_i)_{i\geq 0}$ are bounded, and that
\begin{eqnarray*}
\liminf_{i \rightarrow \infty } \frac{\lambda _i}{\lambda _{i-1}} > 4.
\end{eqnarray*}
Then $\alpha $ is quadratic irrational or transcendental.
\end{thm}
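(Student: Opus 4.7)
The approach mirrors the proof of Theorem \ref{Main Thm}, but with the purely-periodic approximants now built from the actual repeated blocks in place of the constant block $\overline{p-p^{-1}}$. Since $(a_i)_{i\geq 0}$ is not ultimately periodic, Proposition \ref{rational} forces $\alpha\notin\mathbb{Q}$. Aiming for a contradiction, suppose $\alpha$ is algebraic of degree $d\geq 3$. We may assume $a_0=0$ and set, for each $i\geq 0$,
$$\eta^{(i)}:=[0,a_1,\ldots ,a_{n_i-1},\overline{a_{n_i},\ldots ,a_{n_i+k_i-1}}].$$
By Lemma \ref{height estimate}, $\eta^{(i)}$ is rational or quadratic irrational with $H(\eta^{(i)})\leq 2|q_{n_i+k_i-1}|_p^4$, so lies in a field $K_i:=\mathbb{Q}(\eta^{(i)})$ of degree at most $2$; since $d\geq 3$, $\alpha\in\overline{K_i}\setminus K_i$. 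Moreover, Lemma \ref{upper bound} gives
$$|\alpha-\eta^{(i)}|_p\leq |q_{m_i}|_p^{-2},\qquad m_i:=n_i+k_i\lambda_i-1.$$

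The core of the argument, exactly as in the proof of Theorem \ref{Main Thm}, will be to show that for each $\chi>2$ one has $|\alpha-\eta^{(i)}|_p>|q_{n_i+k_i-1}|_p^{-4\chi}$ for all sufficiently large $i$. If this failed, Proposition \ref{primitive absolute} would give $|\alpha-\eta^{(i)}|_p\leq C\,\overline{H}_{K_i}(\eta^{(i)})^{-\chi}$ infinitely often; after extracting a subsequence on which $K_i$ is one fixed field $K$ of degree at most $2$, Theorem \ref{Roth} (applied in $K$, with $\alpha\in\overline{K}\setminus K$) yields a contradiction. Combining the claim with the approximation bound then gives $|q_{m_i}|_p<C'|q_{n_i+k_i-1}|_p^{2\chi}$ eventually. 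Writing $|q_{m_i}|_p=c_i^{\lambda_i}|q_{n_i-1}|_p$ and $|q_{n_i+k_i-1}|_p=c_i|q_{n_i-1}|_p$, where $c_i$ denotes the $p$-adic absolute value of one repeated block, and applying the bounds $p^n\leq |q_n|_p\leq A^n$ from Lemma \ref{fundamental} together with the boundedness of $(k_i)$, one converts this into an asymptotic inequality of the shape $\lambda_ik_i/n_i<(2\chi-1)\log A/\log p+o(1)$. On the other hand, the recursion $n_{i+1}=n_i+\lambda_ik_i$ combined with $\liminf \lambda_i/\lambda_{i-1}>4$ and the boundedness of $(k_i)$ forces $\liminf \lambda_ik_i/n_i\geq \liminf \lambda_i/\lambda_{i-1}-1>3$ by a standard geometric-series estimate. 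Letting $\chi\to 2^+$ then produces the contradiction.

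The main obstacle I anticipate is the pigeonhole step: Theorem \ref{Roth} requires a single base field $K$, while a priori the quadratic extensions $K_i$ could be pairwise distinct. One must either argue that infinitely many $\eta^{(i)}$ share a common field (it is natural first to split off the subcase where infinitely many $\eta^{(i)}$ are rational, so $K=\mathbb{Q}$ works, and then treat the quadratic subcase by passing to a subsequence along which $K_i$ stabilises), or produce an auxiliary argument to handle pairwise-distinct $K_i$. A secondary technical point is ensuring that the clean threshold $4$, independent of $A$, really comes out of the computation: this uses the sharper identity $|q_{m_i}|_p=c_i^{\lambda_i}|q_{n_i-1}|_p$ rather than the crude $|q_n|_p\leq A^n$, so that the key ratio involves $c_i^{\lambda_i-2\chi}$ and the exponent $A$ cancels appropriately.
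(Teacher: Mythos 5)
Your overall architecture --- approximating $\alpha$ by the rational or quadratic numbers $\eta^{(i)}$ built from the actual repeated block, passing to a subsequence on which the block (hence the field) is fixed so that Theorem \ref{Roth} applies, and then telescoping the growth of $|q_n|_p$ --- is indeed the paper's strategy, and your identity $|q_{m_i}|_p=c_i^{\lambda_i}|q_{n_i-1}|_p$ (a consequence of (\ref{pro4})) is the right tool. But the quantitative endgame has a genuine gap, exactly where the threshold $4$ has to be earned. Your stated inequality $\lambda_ik_i/n_i<(2\chi-1)\log A/\log p+o(1)$, set against the lower bound $\liminf_i\lambda_ik_i/n_i>3$, produces a contradiction only when $A=p$; moreover that lower bound is itself false for non-constant $(k_i)$: since $n_i=n_0+\sum_{j<i}\lambda_jk_j$, one only gets $\liminf_i\lambda_ik_i/n_i\geq(3+\delta)/\sup_jk_j$ in general (take $k_i=1$ on your subsequence and $k_j=K$ off it). Your closing remark correctly points to working with $c_i^{\lambda_i-2\chi}<C'\,|q_{n_i-1}|_p^{2\chi-1}$ instead, but this does not by itself make $A$ cancel: since $|q_{n_i-1}|_p=|q_{n_0-1}|_p\prod_{j<i}c_j^{\lambda_j}$, you need the earlier block values to satisfy $\log c_j\leq\log c_i$ (up to bounded error), and nothing in your subsequence extraction guarantees this. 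If earlier blocks have $c_j=A^{K}$ while your chosen blocks have $c_i=p$, the resulting inequality $(\lambda_i-2\chi)\log c_i<(2\chi-1)\sum_{j<i}\lambda_j\log c_j+O(1)$ is perfectly compatible with $\lambda_i/\lambda_{i-1}>4+\delta$, so no contradiction follows.

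The missing idea is Baker's maximality pigeonhole, which the paper implements through the quantities $P^{(i)}$ with $|P^{(i)}|_p=|a_{n_i}\cdots a_{n_i+k_i-1}|_p$: among the finitely many values attained for infinitely many $i$, let $P$ be the \emph{largest}, and choose $\Lambda$ so that $|P^{(i)}|_p=P$ on $\Lambda$ while $|P^{(i)}|_p\leq P$ for all sufficiently large $i$. This gives $|q_{n_{i+1}-1}|_p\leq c_3P^{\lambda_i}|q_{n_i-1}|_p$ for \emph{all} $i$ together with the matching lower bound on $\Lambda$, so the telescoped inequality becomes $P^{\lambda_i}<c_5c_6^{\,i}P^{(2\chi-1)(\lambda_{i-1}+\cdots+\lambda_N)}$ with a single base $P$; dividing by $\log P$ and summing the geometric series $\sum_{j\geq 0}(4+\delta/2)^{-j}$ yields $\lambda_i/\lambda_{i-1}<4+\delta$ for large $i\in\Lambda$, the desired contradiction. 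Note also that the paper compares $\lambda_i$ directly with $\lambda_{i-1}$ rather than $\lambda_ik_i$ with $n_i$, which sidesteps your $k_i$-variation problem. Without the selection of the maximal infinitely-attained block value, the argument does not close.
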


\begin{rem}
{\normalfont
There exist quadratic irrational numbers whose Ruban continued fraction expansions are not ultimately periodic  by Corollary \ref{quadratic cor}.
Therefore, it is difficult to determine whether a given Ruban continued fraction is quadratic irrational or transcendental.
However, we see that there exist a transcendental number in the set of Ruban continued fractions which satisfy the assumption of Theorem \ref{Thm2} and \ref{Thm3}.
For example, (\ref{Ex2}) satisfies the assumption of Theorem \ref{Thm2} and \ref{Thm3}.}
\end{rem}

In the  following, $c_1 , c_2 , \ldots ,c_6$ denote positive real numbers which depend only on $\alpha $, and we may assume that $a_0 =0$.

\begin{proof}[Proof of Theorem \ref{Thm2}]
By the assumption, there exists $\delta >0$ such that $\lambda _i >(B'+\delta )n_i$ for infinitely many $i$.
For each positive integer $i$, there are only finitely many possibilities for $k_i$ and for 
$$a_{n_i}, a_{n_i +1}, \ldots , a_{n_i +k_i -1}.$$
Therefore, there exist a positive integer $k$ and $b_1 ,b_2 , \ldots ,b_k \in S' _p$ such that there are infinitely many $j$ which satisfy
\begin{eqnarray}
k_j =k,\ a_{n_j} =b_1 , \ldots ,a_{n_j +k_j -1} =b_k,\ \lambda _j >(B'+\delta )n_j . \label{Lambda2}
\end{eqnarray}
Let $\Lambda $ be an infinite set of $j$ which satisfy (\ref{Lambda2}).

For $i\in \Lambda$, we put
\begin{eqnarray*}
\eta ^{(i)}:=[0,a_1,\ldots ,a_{n_i-1},\overline{b_1,\ldots ,b_k}].
\end{eqnarray*}
By Proposition \ref{rational}, $\alpha $ is not rational.
Suppose that $\alpha $ is an algebraic number of degree at least three.
We show that if $\chi >2$, then we have
\begin{eqnarray}
|\alpha - \eta ^{(i)}|_p > |q_{n_i + k_i -1}|_p ^{-4\chi } \label{contra2}
\end{eqnarray}
for all sufficiently large $i \in \Lambda $.
Suppose the claim is false.
By Lemma \ref{height estimate}, $\eta ^{(i)}$ is rational or quadratic irrational for each $i \in \Lambda $.
Let us assume that $\eta ^{(i)}$ is quadratic irrational.
Then there exists a quadratic field $K$ such that $\eta ^{(i)} \in K$ for all $i \in \Lambda $.
Take a real number $\varepsilon $ which satisfies $0< \varepsilon < \chi -2$.
Then we have $2^{\chi -\varepsilon } < |q_{n_i + k_i -1}|_p ^{4\varepsilon }$ for all sufficiently large $i \in \Lambda $.
Put $v \in M(K)$ with $v \mid p$.
We denote again by $v$ one of the place extended to $K(\alpha )$.
By $[K(\alpha )_v : \mathbb{Q}_p]=1$, Lemma \ref{height estimate}, and Proposition \ref{primitive absolute}, we obtain
\begin{eqnarray*}
|\alpha - \eta ^{(i)}|_v 
& = & |\alpha - \eta ^{(i)}|_p
\leq |q_{n_i + k_i -1}|_p ^{-4\chi } \\
& \leq  & (2|q_{n_i + k_i -1}|_p ^4)^{\varepsilon -\chi }
\leq H(\eta ^{(i)})^{\varepsilon -\chi  } \\
& \leq & \frac{c_1}{\overline{H}_K (\eta ^{(i)})^{\chi -\varepsilon }}
\end{eqnarray*}
for infinitely many $i$, which contradicts Theorem \ref{Roth}.
In the same way, we see (\ref{contra2}) in the case that $\eta ^{(i)}$ is rational.
By Lemma \ref{upper bound}, we have $|\alpha - \eta ^{(i)}|_p \leq  |q_{m_i}|_p ^{-2}$ for $i \in \Lambda $, where $m_i =n_i +k\lambda _i -1$.
Therefore, we obtain
\begin{eqnarray*}
|q_{m_i}|_p < |q_{n_i + k-1}|_p ^{2\chi }
\end{eqnarray*}
for sufficiently large $i \in \Lambda $.
By Lemma \ref{fundamental},  we see $p^i \leq |q_i|_p \leq A^i$ for $i\geq 1$.
Thus, for all sufficiently large $i \in \Lambda $, we have
$$\lambda _i < c_2 +\left( \frac{1}{2} (B'+1)\chi -1 \right), $$
so
$$\left( 1- \frac{\chi }{2} \right) (B'+1) +\delta < \frac{c_2}{n_i}.$$
This inequality holds for each $\chi >2$, and contradicts if $i$ is sufficiently large in $\Lambda $.
\end{proof}

\begin{proof}[Proof of Theorem \ref{Thm3}]
Put
$$P_h ^{(i)} := [a_{n_i +h-1},a_{n_i +h-2}, \ldots ,a_{n_i}, \overline{a_{n_i +k_i -1},a_{n_i +k_i -2}, \ldots ,a_{n_i}}]$$
for $i=0,1,2,\ldots \ \mbox{and}\ h=1,2,\ldots ,k_i$.
Put
$$ P^{(i)} := \prod_{h=1}^{k_i} P_h ^{(i)} .$$
For each positive integer $i$, there exist only finitely many possibilities for $k_i$ and for 
$$a_{n_i}, a_{n_i +1}, \ldots , a_{n_i +k_i -1}.$$ 
$P^{(i)}$ is a function which depends only on  $k_i , a_{n_i}, a_{n_i +1}, \ldots , a_{n_i +k_i -1}$.
Hence, there exists a real number $P$ such that the greatest of those values $|P^{(i)}|_p$ which are attained for infinitely many $i$.
Then there exists an integer $l$ such that
$$|P^{(i)} |_p \leq P\ \mbox{for all}\ i\geq l.$$
There exist a positive integer $k$ and $b_1 ,b_2 , \ldots ,b_k \in S' _p$ such that there are infinitely many $j$ which satisfy
\begin{eqnarray}
 |P^{(j)}|_p = P,\ k_j =k,\ a_{n_j} =b_1 , \ldots ,a_{n_j +k_j -1} =b_k . \label{Lambda3}
\end{eqnarray}
Let $\Lambda $ be an infinite set of $j$ which satisfy (\ref{Lambda3}).
We may assume that $l=0$.

Let us show that
\begin{gather}
|q_{n_{i+1} -1}|_p \leq c_3 P^{\lambda _i} |q_{n_i -1}|_p\ \ \ \mbox{for all}\ i, \label{all i} \\
|q_{n_{i+1} -1}|_p \geq c_4 P^{\lambda _i} |q_{n_i -1}|_p\ \ \ \mbox{for all}\ i \in \Lambda . \label{all i in Lambda}
\end{gather}
Firstly, an induction allows us to establish the mirror formula
$$ \frac{q_m}{q_{m-1}} = [a_m, \ldots ,a_1], \ \mbox{for all}\ m\geq 1.$$
Put
$$ W_h ^{(i)} := \frac{q_{n_i +h-1}}{q_{n_i +h-2}} ,$$
for $i=0,1,2, \ldots \ \mbox{and}\ h=1,2, \ldots ,k_i \lambda _i$, and
$$ W^{(i)} := \prod_{h=1}^{k_i \lambda _i} W_h ^{(i)} .$$
Clearly, we have $q_{n_{i+1} -1} =W^{(i)} q_{n_i -1}$.
It follows from Lemma \ref{fundamental} and \ref{upper bound} that for any $i$,
\begin{eqnarray*}
|W^{(i)}|_p & = & \prod_{h=1}^{k_i} \prod_{s=0}^{\lambda _i -1} |W_{h+s k_i} ^{(i)}|_p
\leq \prod_{h=1}^{k_i} \prod_{s=0}^{\lambda _i -1} (|P_h ^{(i)}|_p + |U_{h,s} ^{(i)}|_p ^{-2}) \\
& \leq & \prod_{h=1}^{k_i} \prod_{s=0}^{\lambda _i -1} |P_h ^{(i)}|_p (1+p^{-2(h+s k_i -1)})
\leq |P^{(i)}|_p ^{\lambda _i} \prod_{h=1}^{k_i \lambda _i} (1+p^{2-2 h}) \\
& \leq & c_3 P^{\lambda _i} ,
\end{eqnarray*}
where $U_{1,0} ^{(i)} =1$ and otherwise $U_{h,s} ^{(i)}$ is the denominator of $(h+s k_i -1)$-th convergent to $P_{h} ^{(i)}$.
Likewise, for all $i$, we have
\begin{eqnarray*}
|W^{(i)}|_p & \geq & \prod_{h=1}^{k_i} \prod_{s=0}^{\lambda _i -1} (|P_h ^{(i)}|_p - |U_{h,s} ^{(i)}|_p ^{-2}) \\
& \geq & |P^{(i)}|_p ^{\lambda _i} \left( 1- \frac{1}{|P_1 ^{(i)}|_p}\right) \prod_{h=2}^{k_i \lambda _i} (1-p^{2-2 h}).
\end{eqnarray*}
If $i \in \Lambda $, then $|P^{(i)}|_p = P$ and $P_1 ^{(i)}$ is independent of $i$. 
Therefore, we obtain
$$ |W^{(i)}|_p \geq c_4 P^{\lambda _i} \ \ \ \mbox{for all}\ i \in \Lambda .$$
If $A$ and $K$ are the upper bounds of the sequences $(|a_i|_p)_{i\geq 0}$ and $(k_i)_{i\geq 0}$, then for all $i$, we have
\begin{eqnarray}
|q_{n_i +k_i -1}|_p \leq A^K |q_{n_i -1}|_p . \label{trivial}
\end{eqnarray}

Now, there exist a real number $\delta >0$ and an integer $N\geq 1$ such that $\lambda _i >(4+\delta )\lambda _{i-1}$ for all $i>N$.
Set $\chi :=2 + \delta /4$.
For $i\in \Lambda$, we put
\begin{eqnarray*}
\eta ^{(i)}:=[0,a_1,\ldots ,a_{n_i-1},\overline{b_1,\ldots ,b_k}].
\end{eqnarray*}
By Proposition \ref{rational}, $\alpha $ is not rational.
Suppose that $\alpha $ is an algebraic number of degree at least three.
Then we have
\begin{eqnarray*}
|\alpha - \eta ^{(i)}|_p > |q_{n_i + k_i -1}|_p ^{-4\chi }
\end{eqnarray*}
for all sufficiently large $i \in \Lambda $.
This follows by the same way as in the proof of Theorem \ref{Thm2}.
By Lemma \ref{upper bound}, we see $|\alpha - \eta ^{(i)}|_p \leq  |q_{n_{i+1} -1}|_p ^{-2}$ for all $i$.
Therefore, we obtain
\begin{eqnarray}
|q_{n_{i+1} -1}|_p < |q_{n_i + k_i -1}|_p ^{2\chi } \label{contra3}
\end{eqnarray}
for all sufficiently large $i \in \Lambda $.
Applying (\ref{all i}), (\ref{all i in Lambda}), (\ref{trivial}), and (\ref{contra3}), we have for all sufficiently large $i \in \Lambda $,
$$P^{\lambda _i} < c_5 c_6 ^i P^{(2\chi -1)(\lambda _{i-1} + \lambda _{i-2} + \cdots + \lambda _N)} .$$
Taking logarithms, we see that for all sufficiently large $i \in \Lambda $,
\begin{eqnarray*}
\frac{\lambda _i}{\lambda _{i-1}} & < & \frac{\log c_5 +i \log c_6}{\lambda _{i-1} \log P} + (2\chi -1) \sum_{j=0}^{\infty } \left( \frac{1}{4+\delta /2} \right)^j \\
& = & \frac{\log c_5 +i \log c_6}{\lambda _{i-1} \log P} +4+ \frac{\delta }{2} .
\end{eqnarray*}
Since $i/\lambda _i \rightarrow 0\ \mbox{as}\ i \rightarrow \infty $, we have
$$\frac{\lambda _i}{\lambda _{i-1}} < \frac{\delta }{2} +4+ \frac{\delta }{2} = 4+\delta $$
for all sufficiently large $i \in \Lambda $.
This contradicts, and the proof is complete.
\end{proof}

\subsection*{Acknowledgements}
I am greatly indebted to Prof.\ Kenichiro Kimura for several helpful comments concerning the proof of main theorems.
I also wish to express my gratitude to Prof.\ Masahiko Miyamoto.

\end{document}